\def\Enn{{\mathbb N}}
\def\fmod#1 #2{#1\ ({\rm mod}\ #2)}
\def\clo{{\rm Cl}}
\def\comp{{\rm c}}
\title{Kuratowski's Theorem for Two Closure Operators}
\author{Jeffrey Shallit \\
School of Computer Science \\
University of Waterloo \\
Waterloo, ON  N2L 3G1 \\
Canada \\
{\tt shallit@cs.uwaterloo.ca} \\
\and
Ross Willard\\
Department of Pure Mathematics\\
University of Waterloo \\
Waterloo, ON  N2L 3G1 \\
Canada \\
{\tt rdwillar@uwaterloo.ca}}
\theoremstyle{plain}
\newtheorem{theorem}{Theorem}
\newtheorem{lemma}[theorem]{Lemma}
\newtheorem{proposition}[theorem]{Proposition}
\theoremstyle{definition}
\newtheorem{definition}[theorem]{Definition}
\newtheorem{example}[theorem]{Example}
\newtheorem{question}{Question}
\theoremstyle{remark}
\newtheorem{remark}[theorem]{Remark}
\begin{document}

\maketitle

\begin{abstract}
A celebrated 1922 theorem of Kuratowski states that there are at most
$14$ distinct sets arising from applying the operations of 
complementation and closure, any number of times, in any order, to a subset of 
a topological space.  In this paper we consider the case of
complementation and {\it two\/} abstract closure operators.
In contrast to the case of a single closure operation, we show
that infinitely many distinct sets can be generated, even when the closure
operators commute.
\end{abstract}

\section{Introduction}

Let $S$ be a topological space, and let $A$ be a subset of $S$.  Let
$\clo(A)$ denote the topological closure of $A$, and $\comp(A)$ denote
$S - A$, the complement of $A$.
In 1922, Kuratowski \cite{Kuratowski:1922} observed that if we start
with an arbitrary $A$, and then apply the operations $\clo, \comp$ in
any order, any number of times, at most $14$ distinct sets are generated.
More precisely, the monoid of operations generated by
$\clo$ and $\comp$ is of cardinality $14$.  We call this monoid the
{\it Kuratowski monoid}.

However, as Hammer \cite{Hammer:1960} observed, we do not really need
all the axioms of a topological space; the same result holds in
a more abstract setting.  Let $S$ be a set, and let $k: 2^S \rightarrow 2^S$
be a map such that for all $A, B \subseteq S$, we have
\begin{enumerate}
\item $A \subseteq k(A)$; (the {\it expanding} property)

\item $A \subseteq B \implies k(A) \subseteq k(B)$; (the {\it inclusion-preserving} property)

\item $k(k(A)) = k(A)$ ({\it idempotence}).
\end{enumerate}
We call such a map a {\it closure operator}.   
We can now consider the monoid $M$ generated by $k$ and $c$ under
composition.  The identity element of this monoid is denoted by $\epsilon$.
We denote composition by concatenation so that,
for example, $kck(A) = k(c(k(A)))$.  Two elements $f, g$ of $M$ are equal if
$f(A) = g(A)$ for all $A \subseteq S$.  

Furthermore, there is a natural
partial order on elements of $M$, given by $f \leq g \iff
f(A) \subseteq g(A)$ for all $A \subseteq S$.    If $f \leq g$ and
$g \leq f$, we write $f \equiv g$.    

Hammer \cite{Hammer:1960} showed that $kckckck \equiv kck$.
(This follows immediately
from our Theorem~\ref{kura} below.)
It now follows that the monoid
generated by $\lbrace k, c \rbrace$ is
$$ \lbrace \epsilon, k, c, kc, ck, kck, ckc, kckc, ckck, kckck, ckckc, kckckc, ckckck, ckckckc \rbrace $$ 
and hence is of cardinality $14$.

In this note we consider what happens for the case of
{\it two\/} closure operators $x$ and $y$.

For readers wanting to know practically everything about the Kuratowski
theorem and its generalizations,
the admirable 
survey of Gardner and Jackson \cite{Gardner&Jackson:2008} is essential
reading.

\section{Two closure operators}

As mentioned above, it is well-known that a single closure operator
$k$ satisfies the relation $kckckck = kck$.  This result can be
easily generalized 
to {\it two} closure operators, as follows \cite{Charlier&Domaratzki&Harju&Shallit:2011}:

\begin{theorem}
Let $p, q$ be closure operators.  Then $pcqcpcq \equiv pcq$.
\label{kura}
\end{theorem}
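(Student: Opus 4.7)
The plan is to verify the two inequalities $pcqcpcq \leq pcq$ and $pcq \leq pcqcpcq$ separately. The argument rests on two elementary sub-observations. First, $cqc \leq \epsilon$: since $c(A) \subseteq q(c(A))$ by the expanding property of $q$, applying $c$ gives $cqc(A) = c(q(c(A))) \subseteq c(c(A)) = A$. Second, $cpcq \leq q$: the same trick applied with $p$ expanding on the set $c(q(A))$ yields $c(p(c(q(A)))) \subseteq c(c(q(A))) = q(A)$.

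For the first inequality I would parse the composite as $pcqcpcq = p \, (cqc) \, (pcq)$. By the first sub-observation, $cqc \leq \epsilon$; composing on the left with $p$ and using monotonicity of $p$ gives $p \cdot cqc \leq p$. Composing on the right with $pcq$ and then applying idempotence $pp = p$ yields $pcqcpcq \leq p \cdot pcq = pcq$.

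For the reverse inequality I would use the second sub-observation: $cpcq(A) \subseteq q(A)$ for every $A$. Since $q$ is monotone and idempotent, $q(cpcq(A)) \subseteq q(q(A)) = q(A)$. Applying the order-reversing map $c$ flips this to $c(q(cpcq(A))) \supseteq c(q(A))$, and then monotonicity of $p$ preserves the inclusion under one further application of $p$; the left side is $pcqcpcq(A)$ and the right is $pcq(A)$, so $pcq \leq pcqcpcq$.

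The only care required is in tracking which steps preserve inclusions and which reverse them. Once the auxiliary facts $cqc \leq \epsilon$ and $cpcq \leq q$ are isolated, the rest is a short manipulation with the monotonicity and idempotence of $p$ and $q$; I do not foresee any serious obstacle.
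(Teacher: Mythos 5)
Your proof is correct and follows essentially the same route as the paper: the first direction is the paper's argument with the cancellation $cc=\epsilon$ performed one step earlier (via $cqc\leq\epsilon$ rather than $cq\leq c$), and the second direction is the identical chain $cpcq\leq q \Rightarrow qcpcq\leq q \Rightarrow cq\leq cqcpcq \Rightarrow pcq\leq pcqcpcq$. No issues.
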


\begin{proof}
$pcqcpcq \subseteq pcq$:  We have $L \subseteq q(L)$ by the expanding
property.  Then $cq(L) \subseteq c(L)$.  By the inclusion-preserving
property we have $pcq(L) \subseteq pc(L)$.  Since this identity holds
for all $L$, it holds in particular for $cpcq(L)$.  Substituting, we
get $pcqcpcq(L) \subseteq pccpcq(L)$.  But $pccpcq(L) = pcq(L)$ by
the idempotence of $p$.

$pcq \subseteq pcqcpcq$:  We have $L \subseteq p(L)$ by the expanding
property.  Then, replacing $L$ by $cq(L)$, 
we get $cq \subseteq pcq$.  Applying $c$ to both sides, we get
$cpcq \subseteq ccq = q$.  Applying $q$ to both sides, and using
the inclusion-preserving property and idempotence, we get
$qcpcq \subseteq qq = q$.  Applying $c$ to both sides, we get
$cq \subseteq cqcpcq$.  Finally, applying $p$ to both sides and using
the inclusion-preserving property, we get $pcq \subseteq pcqcpcq$.
\end{proof}

\begin{remark}
Theorem~\ref{kura} would also hold if $c$ were replaced by any
inclusion-reversing operation satisfying $cc \equiv \epsilon$.
\end{remark}

    Unlike the case of a single closure operator, 
our identity $pcqcpcq \equiv pcq$ does not suffice to prove that
the monoid generated by $\lbrace c, p, q \rbrace$ is finite.  Indeed,
if $p$ and $q$ have no relations between them, then there is no obvious
reason why any two distinct prefixes of $pqpqpq\cdots$ would be related.

\begin{example}
We construct a simple example where the monoid generated by
$\lbrace p, q \rbrace$ is infinite.

Consider $\Enn = \lbrace 0,1,2, \ldots \rbrace$.  For 
$A \subseteq N$, define
\begin{eqnarray*}
p(A) &=& \begin{cases}
A, & \text{ if } A = \emptyset \text{ or } \sup A = \infty; \\
A \cup \lbrace (\sup A) + 1 \rbrace, & \text{ if } \sup A \text{ is odd; } \\
A, & \text{ otherwise; } 
\end{cases} \\
q(A) &=& \begin{cases}
A, & \text{ if } A = \emptyset \text{ or } \sup A = \infty; \\
A \cup \lbrace (\sup A) + 1 \rbrace, & \text{ if } \sup A \text{ is even; } \\
A, & \text{ otherwise. }
\end{cases}
\end{eqnarray*}
Then it is easy to see that $p$ and $q$ are closure operators and
$(pq)^n (\lbrace 0 \rbrace) = \lbrace 0, 1, \ldots, 2n \rbrace$.  It follows
that the monoid generated by $\lbrace p, q \rbrace$ is infinite.
\label{jeff}
\end{example}

In order then for the monoid generated by $\{c,p,q\}$ to be finite,
we would need additional restrictions
on the closure operators
$p$ and $q$.  A natural restriction is to demand that $p$ and $q$
{\it commute}; that is, $pq \equiv qp$.  

It turns out that the case of two commuting closure operators is quite
interesting.  For example, one quickly finds additional identities,
such as (just to list a few):
\begin{eqnarray*}
pqcpcqcqcpcpq & \equiv & pqcpq \\
pqcpcpcqcqcpq & \equiv & pqcpq \\
pqcqcqcpcpcpcqcpq & \equiv & pqcpq \\
pqcqcpcpcpcqcpqcpq & \equiv &  pqcpq \\
pqcqcqcpcqcqcpcqcqcpq & \equiv & pqcpq \\
pqcpcpcqcpcpcqcpcpcpq & \equiv & pqcpq .  \\
\end{eqnarray*}

In this paper, we
will show two results:  there are infinitely many identities of this kind,
but nevertheless there are still examples where the monoid generated
is infinite.

\newcommand{\set}[2]{\{#1\,:\,\text{#2}\}}
\newcommand{\m}[1]{{\mathbf{\uppercase{#1}}}}
\newcommand{\linb}{\rule{.08in}{0in}\rule{0in}{.06in}}
\newcommand{\psf}{\mathit{psf}}
\newcommand{\barx}{\overline{x}}
\newcommand{\nospcf}{\mathit{f}}

\section{Infinitely many identities}

The goal of this section is to prove the following result.

\begin{theorem} \label{infin-eq}
Let $p,q$ be commuting closure operators.  For all $n \geq 1$ and $a_1,a_2,\ldots,a_{2n}
\in \{p,q,pq\}$, we have $pqca_1ca_2c\cdots a_{2n}cpq \equiv pqcpq$.
\end{theorem}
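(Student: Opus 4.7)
My plan is to prove the identity pointwise by tracking the orbit of an arbitrary $A \subseteq S$ under the composite operator $E := pqca_1ca_2c\cdots a_{2n}cpq$.  Setting $r := pq = qp$, I first observe that $r$ is itself a closure operator: expansion and monotonicity are inherited from $p$ and $q$, while idempotence uses commutativity, $rr = pqpq = ppqq = pq = r$.  Next, each $a \in \{p,q,r\}$ satisfies $a \leq r$ pointwise; for instance $p(X) \subseteq p(q(X)) = r(X)$ because $q$ is expanding and $p$ is inclusion-preserving, and $q \leq r$ is similar.  These are the only facts about the $a_i$ that the argument will use: their specific identities will never matter beyond the single bound $a_i \leq r$.

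The core of the proof is a trajectory invariant.  Fix $A$, set $U := r(A)$ and $V := c(U) = c(r(A))$, and write $y_k$ for the result of applying the rightmost $k$ atomic operators of $E$ to $A$, so that $y_1 = U$, $y_2 = V$, subsequent odd-indexed $y$'s are obtained by applying some $a_i$, even-indexed $y$'s by applying $c$, and finally $y_{4n+3} = r(y_{4n+2}) = E(A)$.  By induction I will establish the loop invariant: $V \subseteq y_m \subseteq r(V)$ whenever $m \equiv 2,3 \pmod 4$, and $c(r(V)) \subseteq y_m \subseteq U$ whenever $m \equiv 0,1 \pmod 4$.  Each complementation swaps these two annuli (since $c(V) = U$, $c(U) = V$, and $cc = \epsilon$), and each application of an $a$ with $a \leq r$ preserves the current annulus: expansion gives the lower bound, while $a(Y) \subseteq r(Y)$ combined with the idempotence identities $r(r(V)) = r(V)$ and $r(U) = U$ gives the upper bound.

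Since $4n+2 \equiv 2 \pmod 4$, the invariant yields $V \subseteq y_{4n+2} \subseteq r(V)$; applying the outer $r$ then squeezes $y_{4n+3} = r(y_{4n+2})$ between $r(V)$ and $r(r(V)) = r(V)$, giving $E(A) = r(V) = pqcpq(A)$, and the identity follows since $A$ was arbitrary.  There is no serious obstacle: the real content of the proof lies entirely in the preliminary bound $a_i \leq r$ together with guessing the right annular invariant.  Once these are in hand, the induction proceeds uniformly, and the combinatorial freedom in choosing the $a_i$ from $\{p,q,pq\}$ (or even from the larger class of all closure operators dominated by $r$) plays no role whatsoever; in particular, Theorem~\ref{kura} is not needed as input.
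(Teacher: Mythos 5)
Your argument is correct: $r=pq$ is a closure operator (idempotence via $pqpq=ppqq=pq$ needs commutativity), each $a_i$ satisfies $1\le a_i\le r$, and the four-phase invariant $V\subseteq y_m\subseteq r(V)$ for $m\equiv 2,3 \pmod 4$ versus $c(r(V))\subseteq y_m\subseteq U$ for $m\equiv 0,1\pmod 4$ does propagate — $c$ is an inclusion-reversing involution exchanging the two intervals, and each $a_i$ preserves its interval because expansion gives the lower bound while $a_i\le r$ together with $r(U)=U$ and $r(r(V))=r(V)$ gives the upper bound; the final application of $r$ then collapses $[V,r(V)]$ to $r(V)=pqcpq(A)$. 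This is a genuinely different organization from the paper's proof, which never tracks the orbit of a single set: the paper reduces to the equivalent identity with a leading $c$, observes that $cac$ is an interior operator, and sandwiches the target word between two rewritings of $cpqcpq$ — one obtained by absorbing the even-indexed letters via $pq\equiv pqa_2a_4\cdots a_{2n}$ and inserting $cc$ pairs so that the $ca_{2i}c$ appear with identity operators in the odd slots, the other by absorbing the odd-indexed letters via $pq\equiv a_1a_3\cdots a_{2n-1}pq$ with identities in the even slots; replacing those identities using $1\le a_{\mathrm{odd}}$ and $ca_{\mathrm{even}}c\le 1$ yields the two opposite inclusions. Both proofs ultimately rest on the same facts ($1\le a_i\le pq$ plus idempotence and monotonicity of $pq$), so neither is more general in substance, but the paper's parity-splitting squeeze is more compact once stated, while your induction is more transparent about what invariant travels along the word and makes explicit that any expanding operators dominated by $pq$ would do. You are also right that Theorem~\ref{kura} is not used as input.
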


\begin{proof}
First note that $pq$ is also a closure operator, and for each $a \in \{p,q,pq\}$ the operator
$cac$ is an \emph{interior operator}, i.e., it is idempotent and inclusion-preserving and
satisfies the \emph{contracting} property: $A \supseteq cac(A)$.  For clarity let $1$ denote
the identity operator.  It will suffice to prove $cpqca_1ca_2c\cdots a_{2n}cpq \equiv cpqcpq$.
We have $pq\equiv pqa_2a_4\cdots a_{2n}$ and hence for any set $A$,
\begin{eqnarray*}
cpqcpq(A) &=& c(pqa_2a_4\cdots a_{2n})cpq(A)\\
&=& cpq(cc)a_2(cc)a_4\cdots (cc) a_{2n}cpq(A)\\
&=& (cpqc)1(ca_2c)1(ca_4c)1\cdots 1(ca_{2n}c)pq(A)\\
&\supseteq& (cpqc)a_1(ca_2c)a_3(ca_4c)a_5\cdots a_{2n-1}(ca_{2n}c)pq(A)
\end{eqnarray*}
on the one hand, while
\begin{eqnarray*}
cpqcpq(A) &=& cpqc(a_1a_3\cdots a_{2n-1}pq)(A)\\
&=& (cpqc)a_11a_31\cdots a_{2n-1}1pq(A)\\
&\subseteq& (cpqc)a_1(ca_2c)a_3(ca_4c)\cdots a_{2n-1}(ca_{2n}c)pq(A)
\end{eqnarray*}
on the other, proving $cpqcpq(A) = cpqca_1ca_2c\cdots a_{2n}cpq(A)$.
\end{proof}

It may be of interest to note that the above reasoning can be carried out within the
following first-order theory.

\begin{definition}
Let $T_{\rm 2com}$ be the theory with constants $1,p,q$,
binary operation $\cdot$ (written informally as juxtaposition), unary operation 
$\overline{\linb}$, binary relation $\leq$,
and axioms that state that if $(M,1,\cdot, \overline{\linb},\leq)$ is a
model of $T_{\rm 2com}$ then:
\begin{enumerate}
\item
$(M,1,\cdot)$ is a monoid.
\item
$(M,\leq)$ is a poset.
\item
$x \leq y$ and $u \leq v$ imply $xu \leq yv$ for all $x,y,u,v \in M$.
\item
$\overline{\overline{x}} = x$.
\item
$\overline{xy} = \overline{x}\cdot \overline{y}$.
\item
$x \leq y$ implies $\overline{y} \leq \overline{x}$.
\item
$\overline{1}=1$.
\item
$1 \leq p=pp$ and $1\leq q=qq$.
\item
$pq=qp$.
\end{enumerate}
\end{definition}

The intended models of $T_{\rm 2com}$ are defined as follows.  For a nonempty set $S$, let $M(S)$ denote the
set of all inclusion-preserving maps $f:2^S \rightarrow 2^S$.
Let $1$ denote the identity map $2^S\rightarrow 2^S$.  For
$f,g \in M(S)$ define
\begin{itemize}
\item
$f \leq g$ iff $f(A)\subseteq g(A)$ for all $A \subseteq S$.
\item
$f\cdot g = f\circ g$.
\item
$\overline{g}(A) = S \setminus g(S
\setminus A)$.  (i.e., $\overline{g} = c \circ g \circ c$ where $c(A) := S \setminus A$ is
the complementation operator.)
\end{itemize}
Note that $c \not\in M(S)$.  Finally, choose two commuting closure operators $p,q$ on $S$.
Then $(M(S), 1,p,q,\cdot,\overline{\linb},\leq)$ is a model of $T_{\rm 2com}$.  (Of course not all models
of $T_{\rm 2com}$ have this form.)

Given $a_0,a_1,\ldots,a_{2n+1} \in \{p,q,pq\}$ we call the word
$w=ca_0ca_1ca_2\cdots ca_{2n+1}$ in $\{p,q,c\}^\ast$ \emph{$c$-balanced} and 
associate with it
the term $t(w):=\overline{a_0}a_1\overline{a_2}a_3\cdots \overline{a_{2n}}a_{2n+1}$ in the language
of $T_{\rm 2com}$.
Observe that if $w_1,w_2$ are $c$-balanced words in $\{p,q,c\}$, then 
$T_{\rm 2com}\models t(w_1)=t(w_2)$ implies $w_1\equiv w_2$ whenever $p,q$ are
commuting closure operators $p,q$.  Hence an alternative proof of Theorem~\ref{infin-eq} can be
provided by proving

\begin{proposition}
For any $a_1,a_2,\ldots,a_{2n} \in \{p,q,pq\}$,
$T_{\rm 2com} \models \overline{pq}a_1\overline{a_2}a_3\cdots \overline{a_{2n}}pq = \overline{pq}pq$.
\end{proposition}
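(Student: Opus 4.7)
The plan is to reproduce the argument of Theorem~\ref{infin-eq} syntactically inside $T_{\rm 2com}$, using axiom~3 (monotonicity of $\cdot$ in both arguments, extended inductively to products of arbitrary length) as a substitute for the inclusion-preservation of composition in $M(S)$. I would first record three preparatory facts holding in every model of $T_{\rm 2com}$. First, $pq\cdot a = a\cdot pq = pq$ for each $a\in\{p,q,pq\}$; this is a short calculation from axioms~8 and~9 (e.g., $pq\cdot p = p(qp) = p(pq) = (pp)q = pq$). Second, $\overline{a}\le 1$ for each $a\in\{p,q,pq\}$: axiom~8 gives $1\le a$, so axioms~6 and~7 give $\overline{a}\le\overline{1}=1$ for $a\in\{p,q\}$, and axioms~3 and~5 then handle $a=pq$ via $\overline{pq}=\overline{p}\cdot\overline{q}\le 1\cdot 1=1$. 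Third, by iterating axiom~5 together with the first fact, $\overline{pq} = \overline{pq\cdot a_2 a_4\cdots a_{2n}} = \overline{pq}\,\overline{a_2}\,\overline{a_4}\cdots\overline{a_{2n}}$.

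I would then establish the two inequalities separately. For $\overline{pq}\,pq \le \overline{pq}\,a_1\overline{a_2}a_3\cdots\overline{a_{2n}}\,pq$, the third preparatory fact gives $\overline{pq}\,pq = \overline{pq}\,\overline{a_2}\overline{a_4}\cdots\overline{a_{2n}}\,pq$, into which I insert $n$ copies of the identity $1$ to obtain $\overline{pq}\cdot 1\cdot\overline{a_2}\cdot 1\cdot\overline{a_4}\cdots 1\cdot\overline{a_{2n}}\cdot pq$; then axiom~3, applied inductively with $1\le a_{2i-1}$ (from axiom~8), replaces each inserted $1$ by $a_{2i-1}$, producing the right-hand side as a larger product. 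For the reverse inequality, iterated use of the first preparatory fact gives $\overline{pq}\,pq = \overline{pq}\,a_1 a_3\cdots a_{2n-1}\,pq$; inserting $n$ copies of $1$ (one after each $a_{2i-1}$) and invoking axiom~3 with $\overline{a_{2i}}\le 1$ then replaces each $1$ by $\overline{a_{2i}}$, yielding the same right-hand side as a smaller product.

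Antisymmetry of $\le$ (axiom~2) combines the two inequalities. The main obstacle is purely bookkeeping: axiom~3 is stated only for two-factor products and must be extended inductively to a product of $2n+2$ factors, and every rewriting must be justified by an explicit axiom of $T_{\rm 2com}$ rather than by set-theoretic intuition. No mathematical idea beyond those already present in the proof of Theorem~\ref{infin-eq} is needed---which is, of course, the whole point of the proposition.
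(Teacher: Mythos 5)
Your proof is correct and is essentially the paper's intended argument: the paper states the Proposition without proof, noting only that the reasoning of Theorem~\ref{infin-eq} ``can be carried out within'' $T_{\rm 2com}$, and your two inequalities (absorbing the $a_{2i}$ into $pq$ via axioms 8--9 for one direction, the $a_{2i-1}$ for the other, then inserting copies of $1$ and comparing them with $a_{2i-1}\geq 1$ or $\overline{a_{2i}}\leq 1$ via axiom~3) are exactly the syntactic translation of that proof.
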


\begin{question}
Suppose $w_1,w_2$ are $c$-balanced words in $\{p,q,c\}^\ast$ with the property that
$w_1\equiv w_2$ whenever $p,q$ are commuting closure operators on a set $S$.
Does it follow that $T_{\rm 2com} \models t(w_1)=t(w_2)$?
\end{question}

\section{An example with infinite generated monoid}

\newcommand{\Even}{\mbox{\textsc{Even}}}
\newcommand{\Odd}{\mbox{\textsc{Odd}}}

In this section improve Example 3 by constructing a pair of commuting closure operators $p,q$
such that the monoid generated by $\{c,p,q\}$ is infinite.

For $n\geq 1$ define $w_n$ to be the word $(cpcpcqcq)^n$.  We
will construct two commuting
closure operators $p,q$ on an infinite set $S$ so that, interpreting $c$ as complementation,
there exists a subset $A \subseteq S$ with the property that $\set{w_n(A)}{$n \geq 1$}$
is infinite.

Define $\Even = \set{2n}{$n \in \mathbb Z$}$ and $\Odd = \mathbb Z \setminus \Even$.
We first define four pairs $(p_{ij},q_{ij})$ of commuting closure operators
($i,j \in \{0,1\}$) on $\mathbb Z$ as follows:  for $A \subseteq \mathbb Z$,
\begin{enumerate}
\item
$p_{00}(A)=q_{00}(A)=A$.
\item
$p_{11}(A)=q_{11}(A)= \mathbb Z$.
\item
$p_{10}(\emptyset)=\emptyset$, $p_{10}(\{2n-1\}) = p_{10}(\{2n\}) =p_{10}(\{2n-1,2n\})
= \{2n-1,2n\}$, and $p_{10}(A)=\mathbb Z$ if there does not exist $n$ with $A \subseteq
\{2n-1,2n\}$.

\noindent
$q_{10}(\emptyset)=\emptyset$, $q_{10}(\{2n\}) = q_{10}(\{2n+1\}) =q_{10}(\{2n,2n+1\})
= \{2n,2n+1\}$, and $q_{10}(A)=\mathbb Z$ if there does not exist $n$ with $A \subseteq
\{2n,2n+1\}$.
\item
$p_{01}(A) = A \cup \Odd$ and $q_{01}(A) = A \cup \Even$.
\end{enumerate}

\begin{lemma}
For all $i,j \in \{0,1\}$,
$p_{ij},q_{ij}$ are closure operators on $\mathbb Z$ and $p_{ij}q_{ij}=q_{ij}p_{ij}$.
\end{lemma}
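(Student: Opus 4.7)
The plan is to verify the two assertions---that $p_{ij}$ and $q_{ij}$ are closure operators and that they commute---separately in each of the four cases, since the four definitions have little in common structurally.

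Two of the cases require no work. When $(i,j)=(0,0)$ the operator is the identity, and when $(i,j)=(1,1)$ it is the constant map to $\mathbb{Z}$; in both cases $p_{ij}$ coincides with $q_{ij}$, and each of these two maps is trivially a closure operator commuting with itself.

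For $(i,j)=(0,1)$, expansion and monotonicity of $A\mapsto A\cup\Odd$ and $A\mapsto A\cup\Even$ follow directly from the definition, and idempotence reduces to $\Odd\cup\Odd=\Odd$ (and the analogous statement for $\Even$). Commutativity is also immediate: both $p_{01}q_{01}(A)$ and $q_{01}p_{01}(A)$ equal $A\cup\Odd\cup\Even=\mathbb{Z}$.

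The substantive case is $(i,j)=(1,0)$. The key structural observation I would make first is that the image of $p_{10}$ is contained in $\{\emptyset,\mathbb{Z}\}\cup\{\{2n-1,2n\}:n\in\mathbb{Z}\}$, with every element of this image being a fixed point of $p_{10}$; the analogous statement holds for $q_{10}$ using the pairs $\{2n,2n+1\}$. Given this, the closure axioms for $p_{10}$ are routine, with a case split on whether the argument $A$ is empty, fits in a single $p$-pair, or fits in none. Commutativity is where care is needed, and I expect it to be the main obstacle (though only organizational). The essential point is that no $p$-pair $\{2n-1,2n\}$ is contained in any $q$-pair $\{2m,2m+1\}$ (the endpoints have mismatched parities), and vice versa. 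Hence whenever $p_{10}(A)$ is a two-element $p$-pair, applying $q_{10}$ to it yields $\mathbb{Z}$; the same holds symmetrically. The remaining case is $A=\{k\}$, a singleton: both $p_{10}(\{k\})$ and $q_{10}(\{k\})$ are two-element sets of consecutive integers (one a $p$-pair, the other a $q$-pair), and by the previous observation applying the other operator to them yields $\mathbb{Z}$. Assembling these cases, $p_{10}q_{10}$ and $q_{10}p_{10}$ agree on every subset of $\mathbb{Z}$.
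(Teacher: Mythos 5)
The paper states this lemma without any proof, treating the verification as routine, so there is no authorial argument to compare against; your proposal supplies precisely the direct case-by-case check the authors are implicitly leaving to the reader. All four cases are handled correctly --- in particular the key points in the $(1,0)$ case (that distinct $p$-pairs are disjoint, that the image of $p_{10}$ consists of fixed points, and that no $p$-pair $\{2n-1,2n\}$ coincides with a $q$-pair $\{2m,2m+1\}$, forcing both composites to send every nonempty set to $\mathbb{Z}$) are exactly the right observations, and the argument is complete.
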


Put $S = \mathbb Z \cup \{\top,\bot\}$.  Define $p,q:2^S \rightarrow 
2^S$ as follows: for $A \subseteq S$,
\begin{eqnarray*}
p(A) = \left\{\begin{array}{cl}
p_{00}(A), & \mbox{if $A \cap \{\top,\bot\}=\emptyset$};\\
p_{10}(A \cap \mathbb Z) \cup \{\top\} & \mbox{if $A \cap \{\top,\bot\}=\{\top\}$};\\
p_{01}(A \cap \mathbb Z) \cup \{\bot\}& \mbox{if $A \cap \{\top,\bot\}=\{\bot\}$};\\
p_{11}(A \cap \mathbb Z) \cup \{\top,\bot\}& \mbox{if $A \cap \{\top,\bot\}=\{\top,\bot\}.$}
\end{array}\right.
\end{eqnarray*}
and similarly for $q$.  Since $p(A) \cap \{\top,\bot\} = q(A) \cap \{\top,\bot\} =
 A \cap \{\top,\bot\}$ for all $A\subseteq S$, 
and since $p_{00} \leq p_{10},p_{01} \leq p_{11}$ and similarly for the
$q_{ij}$,
we can deduce from the previous lemma
that $p,q$ are commuting closure operators on $S$.
Observe that
\begin{eqnarray*}
q(\{2n,\top\}) &=& q_{10}(\{2n\}) \cup \{\top\} ~=~ \{2n,2n+1,\top\}\\
c(\{2n,2n+1,\top\}) &=& (\mathbb Z\setminus \{2n,2n+1\}) \cup \{\bot\}\\
q((\mathbb Z\setminus \{2n,2n+1\}) \cup \{\bot\}) &=& q_{01}(\mathbb Z\setminus \{2n,2n+1\}) 
\cup \{\bot\}
~=~ (\mathbb Z\setminus \{2n+1\}) \cup \{\bot\}\\
c((\mathbb Z\setminus \{2n+1\}) \cup \{\bot\}) &=& \{2n+1,\top\}.
\end{eqnarray*}
Hence $cqcq(\{2n,\top\}) = \{2n+1,\top\}$.  A similar calculation shows
$cpcp(\{2n+1,\top\}) = \{2n+2,\top\}$.  Hence if $A=\{0,\top\}$ then $w_n(A)=\{2n,\top\}$
for all $n \geq 1$, as desired.  We have shown

\begin{theorem}
The operator $cpcpcqcq$ repeatedly 
applied to $S = \mathbb Z \cup \{\top,\bot\}$
results in infinitely many distinct sets.
\end{theorem}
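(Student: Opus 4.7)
The plan is to verify that the explicit calculation sketched just before the theorem genuinely yields $w_n(\{0,\top\}) = \{2n,\top\}$ for all $n\ge 1$, which immediately produces infinitely many distinct images and thus proves the theorem. The heart of the argument is a direct unwinding of the piecewise definitions of $p$ and $q$ along the alternating chain of complementations.

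First I would dispense with the preceding lemma by treating each pair $(p_{ij},q_{ij})$ separately. The cases $i=j=0$ and $i=j=1$ are immediate, and $(p_{01},q_{01})$ is straightforward since $p_{01}q_{01}(A)=q_{01}p_{01}(A)=A\cup\mathbb Z$. The only real work is for $(p_{10},q_{10})$: one must check the closure axioms by a short case analysis on whether the argument lies inside some two-element ``window'' $\{2n{-}1,2n\}$ (for $p_{10}$) or $\{2n,2n{+}1\}$ (for $q_{10}$), and then verify that $p_{10}q_{10}(A)=q_{10}p_{10}(A)=\mathbb Z$ unless $A\subseteq\{2n{-}1,2n,2n{+}1\}$ for some $n$, in which case the two composites coincide by inspection.

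Next I would check that the assembled $p,q$ on $S$ are commuting closure operators. The expanding, inclusion-preserving, and idempotent properties follow componentwise because the ``$\{\top,\bot\}$-part'' of $p(A)$ equals that of $A$, and the operator used on the ``$\mathbb Z$-part'' is determined solely by $A\cap\{\top,\bot\}$. Commutativity $pq=qp$ reduces to the four commutativity identities $p_{ij}q_{ij}=q_{ij}p_{ij}$ supplied by the lemma.

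The main computational step is to establish by induction on $n$ that $w_n(\{0,\top\})=\{2n,\top\}$. For the inductive step, given $\{2k,\top\}$, the authors have already recorded the calculation $cqcq(\{2k,\top\})=\{2k+1,\top\}$; the essential point is that the presence of $\top$ (and absence of $\bot$) in the argument forces $q$ to act on the $\mathbb Z$-coordinate by $q_{10}$, while after complementation $\bot$ appears and $\top$ disappears, forcing $q$ to act by $q_{01}$. An entirely parallel computation, with the roles of even and odd shifted by one, yields $cpcp(\{2k+1,\top\})=\{2k+2,\top\}$. Composing these gives $w_1(\{2k,\top\})=\{2k+2,\top\}$, and hence $w_n(\{0,\top\})=\{2n,\top\}$ by induction. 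The main obstacle, really the only subtle point, is keeping straight how each complementation toggles which of the four piecewise rules $p_{ij},q_{ij}$ governs the next application; once this bookkeeping is in hand, everything else is mechanical.
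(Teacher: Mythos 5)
Your proposal is correct and follows essentially the same route as the paper: verifying the lemma on the four pairs $(p_{ij},q_{ij})$, checking that the assembled $p,q$ on $S=\mathbb Z\cup\{\top,\bot\}$ are commuting closure operators, and then computing $cqcq(\{2k,\top\})=\{2k+1,\top\}$ and $cpcp(\{2k+1,\top\})=\{2k+2,\top\}$ to conclude $w_n(\{0,\top\})=\{2n,\top\}$ by induction. The only quibble is that your description of $p_{10}q_{10}$ is slightly off (both composites equal $\mathbb Z$ on every nonempty set), but this does not affect the argument.
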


\section{Conclusion}

This paper leaves many questions unanswered.  We end with two questions.
Let $\Sigma$ be the set of all formal equations $w_1 = w_2$ where $w_1,w_2$ are words
in $\{c,p,q\}^\ast$ with $w_1\equiv w_2$ whenever $p,q$ are commuting closure operators.

\begin{question}
Is the set $\Sigma$ decidable?
\end{question}

\begin{question}
Can all the members of $\Sigma$ be deduced from some finitely many of them?  More precisely, is
the monoid with generators $c,p,q$ and set of relations $\Sigma$ finitely presented?
\end{question}

\end{document}